\numberwithin{equation}{section}
\theoremstyle{plain}
\newtheorem{theorem}{Theorem}[section]
\newtheorem{corollary}[theorem]{Corollary}
\theoremstyle{definition}
\newtheorem{definition}{Definition}[section]
\newtheorem{example}{Example}[section]
\theoremstyle{remark}
\newtheorem{remark}{\rm\bf Remark}[section]
\begin{document}

\title{Power Lindley distribution and software metrics}
\author{Mohammed Khalleefah, Sofiya Ostrovska and Mehmet Turan}
\date{}
\maketitle

\begin{center}
{\it Atilim University, Department of Mathematics, Incek  06836, Ankara, Turkey}\\
{\it e-mail: mas\_libya86@yahoo.com, sofia.ostrovska@atilim.edu.tr, mehmet.turan@atilim.edu.tr}\\
{\it Tel: +90 312 586 8211,  Fax: +90 312 586 8091}
\end{center}

\begin{abstract}
The Lindley distribution and its numerous generalizations are widely used in statistical and engineering practice. Recently, a power transformation of Lindley distribution, called the power Lindley distribution, has been introduced by M. E. Ghitany et al., who initiated the investigation of its properties and possible applications. In this article, as a continuation of the preceding research, new results on the power Lindley distribution are presented. The focus of this work is on the moment-(in)determinacy of the distribution for various values of the parameters. Afterwards, certain applications are provided to describe data sets of software metrics.

\end{abstract}

{\bf Keywords}: power Lindley distribution, moment problem,  Stieltjes class, software metrics

{\bf 2010 MSC:}  62P30,  
60E05 

\section{Introduction }

Nowadays, new families of probability distributions are being proposed by a large number of authors with the aim to provide appropriate tools to study the tendencies in the behavior of 
data sets emerging in the financial mathematics, medical research, computer science, engineering, and other disciplines. See, for example, \cite{newweibull,ghitanypower,koutras}. Using a variety of criteria and approaches, researchers are seeking distributions to best match experimental data.

The Lindley distribution was introduced in 1958 by D. V. Lindley \cite{lindley}. 
Yet, it continues to draw attention from mathematics and its applications, giving rise to new extensions and modifications. See, for example, \cite{ wind, extended, ghitanylindley, ghitanyzero, ghitanytwo}.
The \textit{Lindley distribution} with parameter $\beta >0$ is defined by the probability density function (PDF) of the form:
\begin{equation}\label{lind}
f(x)=\frac{\beta^2}{\beta+1}(1+x)e^{-\beta x}, \quad x>0.
\end{equation}
Formula \eqref{lind} shows that the Lindley distribution is a two-component mixture of the exponential and two-stage Erlang distributions with the mixing proportion $p=\beta/(\beta+1).$ The distributions of this form come out in reliability theory, for example, in the study of imperfect fault coverage with the probability $p$ of the replacement failure. A comprehensive study of the Lindley distribution and its applications in the framework of reliability theory is performed in \cite{ghitanylindley}. It can be observed that the Lindley distribution as well as the gamma distribution belong to the family of  Kummer distributions. The latter was first introduced in 1993 by Armero and Bayarri for conducting a statistical analysis of $M/M/\infty$ systems. See \cite{armero93, armero}. The study of the Kummer distribution was followed up in \cite{ng} by K. W. Ng and S. Kotz, who obtained new results on the subject and expanded the assortment of the Kummer-type distributions.
The current paper deals with the properties and applications of the power Lindley distribution, which represents the class of $p$-Kummer distributions introduced in \cite{kuwait}. The power Lindley distribution was put forward in 2013 by Ghitany et al.
as follows.

\begin{definition}
 $\cite{ghitanypower}$ The \textit{power Lindley} distribution with parameters $\alpha, \beta >0$ is defined by its PDF function:
 \begin{equation}\label{powerlind} f(x)= \frac{\alpha \beta^2}{\beta +1}\left(1+x^\alpha\right)x^{\alpha -1}e^{-\beta x^\alpha},\quad x>0.\end{equation}
\end{definition}

We write $X\sim PL(\alpha, \beta)$ to indicate that a random variable $X$ possesses a power Lindley distribution with parameters $\alpha$ and $\beta$. Evidently, when $\alpha =1,$ one recovers a Lindley distribution with PDF \eqref{lind}. Observe that $X$ has a Lindley distribution with parameter $\beta$ if and only if $X^{1/\alpha}\sim PL(\alpha, \beta).$ That is, the power Lindley distribution occurs naturally as a power transformation of a random variable following Lindley distribution. Along with that,  power Lindley distribution can also be viewed as a particular case of the $p$-Kummer distribution, whose PDF is given by:
\begin{equation*} 
f_p(x)=\frac{x^{a/p-1}\left(1+x^{1/p}\right)^{-c}\exp\left(-b x^{1/p}\right)}{p\Gamma (a) U(a-c +1,b)},\;a, b, p>0, \;c \in \mathbb{R}, x>0.
\end{equation*}
See \cite[Definition 2]{kuwait}. Here, $\Gamma$ is Euler's gamma-function and $U$ is Kummer's function of the second kind. For their definitions and properties, one may refer to \cite[formulae  6.1.1 and 13.1.3]{abramow}.
Obviously, $X\sim PL(\alpha,\beta)$ if and only if it has $p$-Kummer distribution with $p= 1/\alpha$ and the parameters 
$a=1,$ $b=\beta$ and $c=-1.$

This paper aims to pursue the study of the power Lindley distribution initiated in \cite{ghitanypower}. Specifically, the moment-(in)determinacy for different values of parameters will be determined. It has to be noticed that the moment-(in)determinacy of a probablity distribution is an important factor not only in probability theory, but also in applied areas, see \cite{aerosol, penson, stoyanovfinance}. Moreover, the increasing role of heavy-tailed distributions in financial, engineering and computer science research (\cite{ferreira, mile, stoyanovfinance}) puts additional weight on this subject.  In this connection,  exemplary Stieltjes classes for power Lindley distributions will be provided in the event of the moment-indeterminacy. Finally, some applications will be given to the data sets of software metrics.

\section{Main results}

It is known (\cite[p, 497]{ghitanylindley}) that the characteristic function of the Lindley distribution is expressed by:
$$\phi(t) = \frac{\beta^2(\beta+1-it)}{(\beta+1)(\beta-it)^2}$$ and hence it is analytic for $t\in (-\beta,\beta), $ implying that
the Lindley distribution is moment-determinate.
The situation with the power Lindley distribution is less straightforward, since, for $\alpha <1,$ the characteristic function of $PL(\alpha, \beta)$ distribution is not analytic at 0. Theorem \ref{th2} presents a necessary and sufficient  condition for the moment-(in)determinacy of the power Lindley distribution.

To begin with, some analytical properties of the characteristic functions of the power Lindley distribution are stated in the next claim.

\begin{theorem}\label{th1}
The characteristic function of a power Lindley distribution is entire of order $\alpha/(\alpha-1)$ when $\alpha>1$, analytic on interval $(-\beta, \beta)$ when $\alpha=1$, and is not analytic at 0 otherwise.
\end{theorem}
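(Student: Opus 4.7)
The plan is to reduce everything to the asymptotic growth of the moments $m_k := E[X^k]$, using the standard fact that a characteristic function $\phi$ is analytic in a neighbourhood of $0$ if and only if the formal moment series $\sum_{k\ge 0} (it)^k m_k / k!$ has positive radius of convergence, and in that case $\phi(t)$ coincides with this series on that disk. The same circle of ideas, via the classical formula $\rho = \limsup_{k\to\infty} \frac{k \log k}{\log(k!/m_k)}$ for the order of an entire function given by its Taylor coefficients, lets me read off the order when $\alpha>1$.

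First, I would compute the moments explicitly. The substitution $u = x^\alpha$ in $m_k = \int_0^\infty x^k f(x)\,dx$ with $f$ as in \eqref{powerlind} turns the integral into
\begin{equation*}
m_k \;=\; \frac{\beta^2}{\beta+1}\int_0^\infty u^{k/\alpha}(1+u)\,e^{-\beta u}\,du \;=\; \frac{\beta^{-k/\alpha}}{\beta+1}\,\Gamma\!\left(\tfrac{k}{\alpha}+1\right)\left(\beta + \tfrac{k}{\alpha} + 1\right).
\end{equation*}
Applying Stirling to $\Gamma(k/\alpha+1)$ gives $\log m_k = \tfrac{1}{\alpha} k\log k + O(k)$, and hence
\begin{equation*}
\log\!\left(\frac{k!}{m_k}\right) \;=\; \left(1-\tfrac{1}{\alpha}\right) k\log k + O(k).
\end{equation*}

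The three cases then follow directly. For $\alpha=1$, I would simply cite the closed-form Lindley characteristic function $\phi(t) = \beta^2(\beta+1-it)/[(\beta+1)(\beta-it)^2]$ already displayed in the paper, whose singularity at $t=-i\beta$ shows analyticity precisely on $(-\beta,\beta)$ as a real-variable statement. For $\alpha>1$ the coefficient $1-1/\alpha$ is positive, so $k!/m_k \to \infty$ faster than any geometric rate, giving infinite radius of convergence and therefore an entire extension; plugging the asymptotics into the order formula yields $\rho = \alpha/(\alpha-1)$. For $\alpha<1$ the coefficient $1-1/\alpha$ is negative, so $m_k/k!$ grows super-exponentially, the moment series has radius of convergence $0$, and $\phi$ cannot be analytic at $0$.

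The only delicate point is the equivalence invoked at the start: one must be careful that an analytic characteristic function at $0$ actually forces all moments to exist and the Taylor series to match $\sum (it)^k m_k/k!$. This is standard (finiteness of $\phi^{(2k)}(0)$ controls $m_{2k}$, then dominated convergence identifies the coefficients), but it is the conceptual hinge of the argument; once it is in place, the three regimes for $\alpha$ are distinguished purely by the sign of $1-1/\alpha$, and the computation of the order reduces to a one-line application of Stirling.
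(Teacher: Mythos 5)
Your argument is correct, but it runs along a genuinely different track from the paper's. The paper never touches the moments in this proof: it reads everything off the survival function $S(x)=\bigl(1+\tfrac{\beta}{\beta+1}x^\alpha\bigr)e^{-\beta x^\alpha}$, invoking the Linnik--Ostrovskii criterion that the characteristic function is analytic on $(-R,R)$ if and only if $S(x)=O(e^{-rx})$ for every $r<R$, and then gets the order (and even the type) in one stroke from their Theorem~2.4.4 relating the tail decay $-\log S(x)\sim\beta x^\alpha$ to $\rho=\alpha/(\alpha-1)$. You instead compute the moments explicitly (your formula agrees with \eqref{moments} after simplification), apply Stirling to get $\log(k!/m_k)=(1-1/\alpha)k\log k+O(k)$, and let the sign of $1-1/\alpha$ separate the three regimes, with the coefficient formula $\rho=\limsup_k k\log k/\log(k!/m_k)$ delivering the order. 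Your route is more elementary and self-contained, and it dovetails with the rest of the paper, since the same moment asymptotics are reused in the proof of the moment-(in)determinacy theorem; its cost is exactly the ``conceptual hinge'' you flag, namely the equivalence between analyticity of a characteristic function at $0$ and positive radius of convergence of $\sum m_k r^k/k!$ (together with the fact that convergence for all $r$ forces an entire extension) --- this is standard (Lukacs, Linnik--Ostrovskii \S2.2) and unproblematic for a nonnegative random variable, where $\sum_k m_k r^k/k!=\mathbf{E}[e^{rX}]$ by monotone convergence. The tail-function approach buys a cleaner one-line treatment of all three cases and immediate access to the type $\sigma$; your approach buys explicit control of the Taylor coefficients and avoids citing the more specialized Theorem~2.4.4. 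For $\alpha=1$ you fall back on the closed-form Lindley characteristic function, which the paper also displays, though your own moment formula already gives $\limsup_k(m_k/k!)^{1/k}=1/\beta$ and hence the radius $\beta$ without it.
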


\begin{proof}
The conditions for the analyticity of the characteristic function can be expressed in terms of the tail function, which for the power Lindley distribution coincides with its survival function $S(x)$. According to \cite[formula (3)]{ghitanypower}:
$$S(x)=\left(1+\frac{\beta}{\beta+1}x^\alpha\right)e^{-\beta x^\alpha}, x>0.$$ By \cite[formula (2.2.3)]{linnik}, the characteristic function of the distribution is analytic on $(-R,R)$ if and only if its tail function satisfies
\begin{equation}\label{ent}S(x)=O\left(e^{-rx}\right), \quad x\rightarrow\infty \;\;\mathrm{for\;\;each}\;\;r<R.\end{equation}
Clearly, for $\alpha >1,$ condition \eqref{ent} holds for all $R>0,$ whence in this case the characteristic function is entire, while  for $\alpha=1,$ estimate \eqref{ent} is true only when $r<\beta.$ As for $\alpha <1,$ condition \eqref{ent} is violated whatever $R$ is and, therefore, the characteristic function is not analytic at 0. In the case of the entire characteristic function, its  order $\rho$ and type $\sigma$ can be calculated by Theorem 2.4.4 of 
\cite{linnik}, yielding $\rho = \alpha/(\alpha-1),$ $\sigma=\frac{\alpha -1}{\alpha}\left(\alpha\beta\right)^{-1/(\alpha-1)},$ respectively.
\end{proof}

\begin{corollary} The outcomes of Theorem \ref{th1} can be restated in the following way. The moment generating function of the power Lindley distribution with parameters $\alpha$ and $\beta$:

\begin{itemize} \item exists for all real numbers if $\alpha >1;$

\item exists on interval $(-\beta,\beta)$ if $\alpha=1;$

\item does not exist if $\alpha <1.$ \end{itemize}

\end{corollary}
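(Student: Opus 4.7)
The plan is to deduce the corollary directly from Theorem \ref{th1} by appealing to the standard correspondence between the moment generating function $M(t) = E[e^{tX}]$ of a random variable $X$ and the behaviour of its characteristic function $\phi(t) = E[e^{itX}]$. Specifically, for a nonnegative random variable supported on $(0,\infty)$, the MGF is finite on an interval $(-R,R)$ if and only if $\phi$ admits an analytic extension to the horizontal strip $\{z\in\mathbb{C}:|\operatorname{Im} z|<R\}$ around the real axis; this is precisely the content of \cite[formula (2.2.3)]{linnik} restated in terms of $M$ rather than $\phi$. With this equivalence in hand, each of the three cases of the corollary matches the corresponding case of Theorem \ref{th1}.

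First I would handle the case $\alpha>1$: Theorem \ref{th1} asserts that $\phi$ is entire, so the associated strip has arbitrarily large width, whence $M(t)<\infty$ for every real $t$. For $\alpha=1$, Theorem \ref{th1} gives analyticity of $\phi$ precisely on $(-\beta,\beta)$, and the correspondence transfers this interval verbatim to $M$. For $\alpha<1$, $\phi$ fails to be analytic at the origin, so no strip of positive width exists; equivalently, $M(t)=\infty$ for every $t>0$, and $M$ does not exist on any neighbourhood of zero.

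As a self-contained alternative, one can argue directly from the survival tail $S(x)=\bigl(1+\tfrac{\beta}{\beta+1}x^\alpha\bigr)e^{-\beta x^\alpha}$ obtained in the proof of Theorem \ref{th1}. Integration by parts yields, for $t>0$, $M(t)=1+t\int_0^\infty e^{tx}S(x)\,dx$, so the finiteness of $M(t)$ is equivalent to the integrability of $e^{tx}S(x)$ on $(0,\infty)$. For $\alpha>1$ the tail decays faster than any exponential, producing convergence for every $t$; for $\alpha=1$ the integrand behaves like $x e^{(t-\beta)x}$, which is integrable precisely when $t<\beta$; and for $\alpha<1$, since $tx$ dominates $\beta x^\alpha$ for large $x$ whenever $t>0$, the integral diverges.

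There is no real obstacle in this argument, since the work has already been done in Theorem \ref{th1}; the only point requiring care is the statement for $\alpha<1$, where one should clarify that $M(t)=1$ trivially at $t=0$ and $M(t)\le 1$ for $t\le 0$, so the assertion that the MGF ``does not exist'' should be interpreted in the standard sense that it fails to be finite on any open neighbourhood of the origin.
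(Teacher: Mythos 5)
Your proposal is correct and follows essentially the same route as the paper, which states the corollary without a separate proof precisely because it is the standard translation of Theorem \ref{th1} (analyticity of the characteristic function on $(-R,R)$, equivalently the tail estimate of \cite[formula (2.2.3)]{linnik}) into the language of the moment generating function. Your additional direct verification via the survival function $S(x)$ is a sound, self-contained alternative, and your closing remark correctly pins down the only point needing care, namely that for $\alpha<1$ the MGF is still finite for $t\le 0$ but on no open neighbourhood of the origin.
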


\begin{corollary} If $\alpha\geq 1,$ then $PL(\alpha,\beta)$ distribution is moment-determinate.\end{corollary}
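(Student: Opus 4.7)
The plan is to reduce the statement directly to Theorem \ref{th1} via a classical criterion: a probability distribution whose characteristic function is analytic in some neighborhood of the origin is uniquely determined by its moments. This criterion is equivalent to Cram\'er's condition on the moment generating function and can be found, for instance, in Linnik and Ostrovskii \cite{linnik}.

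First, I would split into the two cases permitted by the hypothesis. When $\alpha>1$, Theorem \ref{th1} asserts that the characteristic function of $PL(\alpha,\beta)$ is entire, hence certainly analytic in a neighborhood of $0$. When $\alpha=1$, Theorem \ref{th1} gives analyticity on $(-\beta,\beta)$, which again is a neighborhood of the origin. In either case, analyticity at $0$ supplies a radius of convergence $R>0$ of the Taylor expansion of $\phi(t)$ at $t=0$.

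Next, I would invoke the standard consequence that, under analyticity of $\phi$ at $0$, the moments $m_k$ of $X\sim PL(\alpha,\beta)$ satisfy $\limsup_{k\to\infty}(m_k/k!)^{1/k}\leq 1/R<\infty$, so Cram\'er's condition is met. This in turn implies by the classical moment-uniqueness theorem that no other probability measure on $(0,\infty)$ shares the same moment sequence, i.e. the distribution is moment-determinate.

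The argument is almost mechanical once Theorem \ref{th1} is in hand; the only thing to be careful about is citing the right form of the uniqueness theorem (analyticity of the characteristic function at $0$, rather than on the whole real line, is all that is needed, and this is exactly what Theorem \ref{th1} provides in the borderline case $\alpha=1$). No obstacle is anticipated.
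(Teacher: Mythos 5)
Your proposal is correct and follows the same route as the paper: the paper likewise deduces the corollary immediately from Theorem \ref{th1} together with Cram\'er's condition for moment-determinacy (citing \cite[Theorem 1]{recent}), since analyticity of the characteristic function at the origin for $\alpha\geq 1$ is exactly what is needed. Your write-up merely spells out the two cases and the moment-growth consequence in more detail.
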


This follows immediately from Cram\'er's condition for the moment-determinacy \cite[Theorem 1]{recent}.
The case $\alpha <1$ needs an additional investigation. Notice that in this case the distribution $PL(\alpha,\beta)$ becomes heavy-tailed. While each light-tailed distribution is uniquely determined by its moments,  for heavy-tailed distributions the uniqueness may not hold.  Heavy-tailed distributions, many of which are not unique with respect to the moments, are instrumental in stock market modeling and engineering \cite{stoyanovfinance}. 
For this reason, non-uniqueness of the distributions with respect to moments needs deep investigation.
The respective outcomes on the moment-(in)determinacy of the power Lindley distribution are summarized in the next assertion.

\begin{theorem}\label{th2} 
The power Lindley distribution is moment-indeterminate if and only if $\alpha <1/2.$
\end{theorem}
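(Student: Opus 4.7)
The plan is to split the argument at $\alpha=1/2$, noting that Corollary 2.1 already settles the range $\alpha\ge 1$, so only $\alpha<1$ is at stake. As a first step I would exploit the power-transformation identity mentioned in the introduction: if $Y\sim PL(\alpha,\beta)$, then $X:=Y^{\alpha}\sim\mathrm{Lindley}(\beta)$. Integrating $x^{n/\alpha}$ against the Lindley density \eqref{lind} term-by-term yields the closed form
\begin{equation*}
m_n := E[Y^n] = E[X^{n/\alpha}] = \frac{\beta^{2}}{\beta+1}\left(\frac{\Gamma(n/\alpha+1)}{\beta^{n/\alpha+1}} + \frac{\Gamma(n/\alpha+2)}{\beta^{n/\alpha+2}}\right),
\end{equation*}
and Stirling's formula then delivers the clean asymptotic $m_n^{1/(2n)}\sim C(\alpha,\beta)\,n^{1/(2\alpha)}$ as $n\to\infty$.

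For the determinacy half in the subrange $1/2\le\alpha<1$, this asymptotic reduces Carleman's series $\sum_{n}m_n^{-1/(2n)}$ to $\sum_{n}n^{-1/(2\alpha)}$, which diverges exactly when $1/(2\alpha)\le 1$, that is, when $\alpha\ge 1/2$. Carleman's criterion therefore closes the determinacy side throughout $1/2\le\alpha<1$, and combined with Corollary 2.1 it covers all $\alpha\ge 1/2$.

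For the indeterminacy half $\alpha<1/2$, the natural tool is Krein's criterion for the Stieltjes moment problem: if $f$ is continuous and positive on $(0,\infty)$, finiteness of
\begin{equation*}
K = \int_0^\infty \frac{-\log f(x^2)}{1+x^2}\,dx
\end{equation*}
forces moment-indeterminacy. From \eqref{powerlind} one reads off $-\log f(x)=\beta x^{\alpha}+O(\log x)$ as $x\to\infty$, so $-\log f(x^2)\sim\beta x^{2\alpha}$ up to logarithmic corrections. The dominant tail contribution $\int^{\infty}x^{2\alpha}/(1+x^2)\,dx$ converges precisely when $2\alpha<1$; the integrand near $x=0$ only has a logarithmic singularity and is thus integrable against the weight $(1+x^2)^{-1}$. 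Hence $K<\infty$ for every $\alpha<1/2$, and Krein's theorem produces the desired indeterminacy.

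The principal obstacle I anticipate is not either threshold computation in isolation but confirming that the form of Krein's criterion invoked genuinely applies to \eqref{powerlind} without extra regularity hypotheses. Continuity and strict positivity of $f$ on $(0,\infty)$ are manifest, so the classical Krein statement applies directly; if a Krein--Lin strengthening is needed instead, one verifies that the auxiliary function $-xf'(x)/f(x)$ is monotone for large $x$, which is clear from the dominance of its $\alpha\beta x^{\alpha}$ term. With these technicalities handled, Steps~2 and~3 combine to yield the stated equivalence.
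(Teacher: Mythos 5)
Your proof is correct, but it routes both halves of the equivalence through different classical criteria than the paper does. For determinacy on $1/2\le\alpha<1$ the paper does not sum Carleman's series directly: it bounds the ratio $m_{k+1}/m_k=O(k^2)$ and invokes the checkable sufficient condition (s1) of \cite[Theorem 2]{recent}, which is a packaged form of the same estimate; your explicit asymptotic $m_n^{1/(2n)}\sim C(\alpha,\beta)\,n^{1/(2\alpha)}$ is a more self-contained version of the same computation and handles the boundary case $\alpha=1/2$ correctly (the exponent $1/(2\alpha)=1$ still gives a divergent series). The genuine difference is in the indeterminacy half: the paper stays on the moment side, deriving the lower bound $m_k\ge Ck^{(2+\varepsilon)k}$ via Stirling and then supplementing it with Lin's condition on the density so as to apply \cite[Theorem 7]{recent}, whereas you work entirely on the density side through the Stieltjes form of Krein's logarithmic integral $\int_0^\infty\frac{-\log f(x^2)}{1+x^2}\,dx$, whose convergence exactly when $2\alpha<1$ you verify correctly (near $x=0$ the singularity of $\log f$ is only logarithmic since $f(x)\sim Cx^{\alpha-1}$, so integrability there is automatic). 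Both tools appear in the survey the paper relies on and both close the argument; Krein's criterion is arguably the more economical choice here because only the tail of $\log f$ matters, while the paper's moment-growth route keeps the entire proof inside a single moment-theoretic framework. Incidentally, your safeguard computation of Lin's function is the correct one: $L_f(x)=\alpha\beta x^\alpha-\alpha x^\alpha/(1+x^\alpha)-(\alpha-1)\sim\alpha\beta x^\alpha$, whereas the paper's displayed formula for $L_f$ contains a slip (a spurious $-x\ln C$ term and $\beta x^{\alpha+1}$ in place of $\alpha\beta x^{\alpha}$); this does not affect the paper's conclusion, since either expression tends monotonically to $+\infty$.
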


\begin{proof} In essence, the proof is based on the estimates for the rate of growth of moments. The needed facts are presented in the review \cite{recent}. In the context of this proof, letter $C$ - with or without subscripts - is used to denote positive constant whose value does not need being evaluated.

If $X\sim PL(\alpha,\beta),$ then the moments of $X$ have been calculated in \cite{ghitanypower} as follows:
\begin{equation}\label{moments} m_k=\mathbf{E}\left[X^k\right]=\frac{k\Gamma(k/\alpha)[\alpha(\beta+1)+k]}{\alpha^2\beta^{k/\alpha}(\beta+1)}, \quad k\in \mathbb{N}.\end{equation}
Hence, if $\alpha\geqslant 1/2,$ then
$$\frac{m_{k+1}}{m_k}\leqslant C\frac{\Gamma(k/\alpha+2)}{\Gamma(k/\alpha)}=C(k/\alpha+2)(k/\alpha+1)=O\left(k^2\right),\quad k\rightarrow \infty,$$ and by the condition (s1) \cite[Theorem 2]{recent}, the distribution is moment-determinate.

To examine the case $\alpha <1/2,$ we write using \eqref{moments}:
$$m_k=C\frac{\Gamma (k/\alpha +1) (k/\alpha +\beta+1)}{\beta^{k/\alpha}}.$$ 
Applying Stirling's formula, one has:
\begin{equation}\label{mgrowth}
m_k=C\frac{(k/\alpha)(k/\alpha+\beta+1)\Gamma(k/\alpha)}{\beta^{k/\alpha}}\geqslant Ck^{3/2}\exp\left\{\frac{k}{\alpha}\ln k-C_1k\right\}\:\mathrm{for\;\;some}\;\;C,C_1>0.
\end{equation} 
Since $\alpha < 1/2,$ writing $1/\alpha=2+2\varepsilon,$ one obtains:
$$m_k\geq C_2k^{3/2}\exp\{(2+\varepsilon)k\ln k\}\geq C_2k^{(2+\varepsilon)k} \quad\mathrm{for\;\;all}\;\;k\in\mathbb{N}.$$
To show that the distribution is moment-indeterminate, the estimate \eqref{mgrowth} has to be supplemented by checking whether the density \eqref{powerlind} satisfies Lin's condition, that is, to show that Lin's function $L_f(x):=-\frac{xf^\prime(x)}{f(x)}$ is monotone increasing for $x>x_0$ and that $\displaystyle \lim_{x\rightarrow\infty}L_f(x)=+\infty.$ Plain calculations yield:
$$L_f(x)=-x\ln C-\alpha x^\alpha /(1+x^\alpha)-(\alpha -1)+\beta x^{\alpha +1}\sim \beta x^{\alpha +1}
\rightarrow+ \infty\quad \mathrm{as}\;\;x\rightarrow\infty.$$ 
In addition,
$$L_f^\prime(x)=\beta(\alpha +1)x^\alpha[1+o(1)]\quad\mathrm{as}\quad x\rightarrow \infty,$$ implying that $L_f^\prime(x)>0$ for $x$ large enough. Thus, by  \cite[Theorem 7]{recent} when $\alpha<1/2,$ distribution $PL(\alpha,\beta)$ is moment-indeterminate. The proof is complete.

\end{proof}

\begin{remark} Alternatively, the moment-(in)determinacy of a power Lindley distribution can be derived from \cite[Theorem 7]{kuwait}, where a more complicated approach was used.
\end{remark}

When a probability distribution is moment-indeterminate, the problem arises to expose different distributions with the same moments of all orders.
In this paper, this will be done by presenting Stieltjes classes for the density \eqref{powerlind}, which are infinite families of PDFs having the same moments of all orders. Although the Stieltjes classes per se can be traced to the works of P. L. Chebyshev, T. Stieltjes, and C. Heyde \cite{stieltjes, jap},  the name itself is quite recent.
To pay tribute to the contribution of Stieltjes to the moment problem, J. Stoyanov \cite{jap}  in 2004 suggested  the name `Stieltjes classes', thus triggering their systematic study, which is still in progress. See, for example \cite{recent, alea, pakes, penson} and references therein.

For the convenience of readers, we supply the necessary definitions below.

\begin{definition}
Let $f(x)$ be a PDF of a random variable $X$ with finite moments of all orders, and let $h(x)$ be an integrable function on $(-\infty,\infty)$ such that $\sup\limits_{x\in\mathbb{R}}|h(x)|=1.$ If, for all $k\in\mathbb{N}_{0},$
$$\int_{\mathbb{R}}x^{k}h(x)f(x)dx=0,$$
then $h(x)$ is called a {\it perturbation} function of the density $f(x).$
\end{definition}

\begin{definition}
Let $f(x)$ be a PDF and $h(x)$ be a perturbation function of $f(x).$ The set
$$S=S(f,h):= \{f_{\epsilon}(x):f_{\epsilon}(x)=f(x)[1+\epsilon h(x)],\ x\in\mathbb{R},\ \epsilon\in[-1,1]\}$$
is said to be a {\it Stieltjes class} for $f(x)$ based on $h(x).$
\end{definition}
Obviously, $S$ is an infinite family of densities all having the same sequence of moments as $f(x).$ Observe that, for a density function $f(x),$ there are different Stieltjes classes based on various perturbation functions $h(x).$
The next statement provides exemplary perturbation functions for \eqref{powerlind}.

\begin{theorem} The following functions are perturbations for PDF \eqref{powerlind} in the case $\alpha <1/2$:
\begin{enumerate} \item[{\rm(i)}] 
$\displaystyle H_1(x)=M_1\frac{x^{1-\alpha}}{1+x^\alpha}\exp(-\beta x^\alpha)\sin\left[2\beta x^\alpha \tan (\pi\alpha)\right];$
\item[{\rm(ii)}] $H_2(x)=\displaystyle M_2\frac{x^{1-\alpha}}{1+x^\alpha}\exp(\beta x^\alpha -bx^\gamma )\sin\left[b x^\gamma\tan(\pi \gamma)\right]$, where $b>0, \gamma\in (\alpha,1/2);$
\item[{\rm(iii)}]  $H_3(x)=\displaystyle M_3\frac{\sin[\beta x^\alpha\tan(\pi\alpha)-\pi \alpha]+x^\alpha\sin[\beta x^\alpha\tan(\pi\alpha)-2\pi \alpha ]}{1+x^\alpha},$
\end{enumerate} where $H_i(x)=0$ for $x<0$ and  constants $M_i$ are chosen in such a way that $\displaystyle\sup_{x\in \mathbb{R}}|H_i(x)|=1, \,i=1,2,3.$
\end{theorem}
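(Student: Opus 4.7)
My plan is to reduce, for each $i\in\{1,2,3\}$, the verification that $H_i$ is a perturbation function for \eqref{powerlind} to a single classical contour-shift identity. The first step is to establish that for every $s>0$ and every $\lambda\in(0,1/2)$,
\[
\int_0^\infty u^{s-1} e^{-u}\sin\!\bigl(u\tan(\pi\lambda)-\theta\bigr)\,du = \Gamma(s)\cos^s(\pi\lambda)\sin(\pi\lambda s-\theta),
\]
obtained by writing the integrand as $\mathrm{Im}\bigl[e^{i\theta}e^{-uc}\bigr]$ with $c=1-i\tan(\pi\lambda)=e^{-i\pi\lambda}/\cos(\pi\lambda)$, using the standard formula $\int_0^\infty u^{s-1}e^{-cu}du=\Gamma(s)c^{-s}$, and taking imaginary parts. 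The restriction $\lambda<1/2$ is what places $c$ in the open right half-plane and legitimizes the contour rotation.

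Next I would compute $H_i(x)f(x)$ explicitly, exploiting the cancellations that have been engineered into each $H_i$. In (i), the factors $x^{1-\alpha}/(1+x^\alpha)$ and $e^{-\beta x^\alpha}$ inside $H_1$ combine with $x^{\alpha-1}(1+x^\alpha)e^{-\beta x^\alpha}$ from $f$ so that $H_1 f$ is a constant multiple of $e^{-2\beta x^\alpha}\sin(2\beta x^\alpha\tan(\pi\alpha))$; the substitution $u=2\beta x^\alpha$ converts $\int_0^\infty x^k H_1(x)f(x)\,dx$ into the identity above with $\lambda=\alpha$, $\theta=0$, $s=(k+1)/\alpha$, whose sine factor $\sin(\pi(k+1))$ is zero. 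Case (ii) is analogous once the $e^{\beta x^\alpha}$ inside $H_2$ cancels the $e^{-\beta x^\alpha}$ in $f$: the substitution $u=bx^\gamma$ reduces the moment integral to the identity with $\lambda=\gamma$, $\theta=0$, $s=(k+1)/\gamma$, and here the hypothesis $\gamma\in(\alpha,1/2)$ is needed both to keep $\cos(\pi\gamma)>0$ and to force $\beta x^\alpha-bx^\gamma\to-\infty$. In (iii), $H_3 f$ collapses to a constant multiple of $x^{\alpha-1}e^{-\beta x^\alpha}\bigl\{\sin(\beta x^\alpha\tan(\pi\alpha)-\pi\alpha)+x^\alpha\sin(\beta x^\alpha\tan(\pi\alpha)-2\pi\alpha)\bigr\}$; setting $u=\beta x^\alpha$ and applying the identity with $(\lambda,\theta,s)=(\alpha,\pi\alpha,k/\alpha+1)$ for the first summand and $(\lambda,\theta,s)=(\alpha,2\pi\alpha,k/\alpha+2)$ for the second, both pieces land on $\sin(\pi k)=0$.

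It then remains to check that each $H_i$ is bounded, so that a finite $M_i$ realising $\sup_{x\in\mathbb{R}}|H_i(x)|=1$ exists, and that $x^k H_i(x)f(x)$ is absolutely integrable for every $k\in\mathbb{N}_0$. Near $x=0$ the simplified product $H_1 f$ behaves like $x^\alpha$, $H_2 f$ like $x^\gamma$, and $H_3 f$ like $x^{\alpha-1}$, all integrable against any power of $x$; at infinity $H_1 f$, $H_2 f$, and $H_3 f$ enjoy exponential decay in $e^{-2\beta x^\alpha}$, $e^{-b x^\gamma}$, and $e^{-\beta x^\alpha}$ respectively, which dominates any polynomial factor $x^k$. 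The substantive obstacle in this plan is really a single point: the contour-shift identity, whose validity and whose vanishing sine factor both hinge on $\alpha<1/2$ (and, in (ii), $\gamma<1/2$). Once that is in place, the three cases become three different ways of arranging the oscillatory factor so that the resulting moment integral lands on $\sin(\pi m)$ with $m\in\mathbb{Z}$.
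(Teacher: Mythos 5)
Your proposal is correct and follows essentially the same route as the paper: cancel the engineered factors in $H_i f$, substitute $u\propto x^\alpha$ (resp.\ $x^\gamma$), and invoke the classical integral $\int_0^\infty u^{s-1}e^{-cu}\,du=\Gamma(s)c^{-s}$ (equivalently, Gradshteyn--Ryzhik 3.944.9--10, which the paper cites) so that every moment integral lands on $\sin(\pi m)=0$ with $m\in\mathbb{Z}$. Your only departure is cosmetic but tidy: stating the identity with a general phase $\theta$ handles case (iii) in one stroke where the paper expands $\sin(\cdot-\pi\alpha)$ and $\sin(\cdot-2\pi\alpha)$ into sine and cosine pieces first, and you correctly use $\tan(\pi\gamma)$ in case (ii), where the paper's displayed $J_2(k)$ carries a $\tan(\pi\alpha)$ typo.
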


\begin{proof} Since all functions $H_i$ satisfy $\sup_{x\in \mathbb{R}}|H_i(x)|=1$, what is left is to show that
 \begin{equation}\label{oh}\int_0^\infty x^k f(x)H_i(x)dx=0 \quad k\in \mathbb{N}_0,\;i=1,2,3.\end{equation}
 The expressions (i) - (iii)  are derived with the help of \cite[Example 3.2]{alea}. Here, we only have to check equalities \eqref{oh}. For this purpose, the identities below  (\cite[formulae 3.944, 9 and 10]{ryzhik}) will be used:

 \begin{equation}\label{ryzhik9}
\int_0^\infty x^{p-1}e^{-qx}\sin(qx\tan t)dx=\frac{\Gamma (p)}{q^p}\cos^pt\sin(pt), \;p,q>0, \,|t|<\pi/2
\end{equation}
 and
 \begin{equation}\label{ryzhik10}
\int_0^\infty x^{p-1}e^{-qx}\cos(qx\tan t)dx=\frac{\Gamma (p)}{q^p}\cos^pt\cos(pt), \;p,q>0, \,|t|<\pi/2.
\end{equation}
 Denote: $$J_i(k):=\frac{\beta +1}{\alpha \beta^2 M_i}\int_0^\infty x^kf(x)H_i(x)dx,\quad i=1,2,3.$$
 Then, the substitution $x\mapsto x^\alpha$ yields
$$J_1(k)=
 \frac{1}{\alpha} \int_0^\infty x^{(k+1)/\alpha-1}e^{-2\beta x}\sin (2\beta x\tan(\pi \alpha))dx.$$ Setting $p=(k+1)/\alpha, q=2\beta,$ and $t=\pi\alpha,$ one derives from \eqref{ryzhik9}
 $$J_1(k)=\frac{\Gamma (p)}{\alpha q^p}\cos^p(\pi\alpha)\sin((k+1)\pi)=0,\;\;k\in \mathbb{N}_0.$$
Observe that \eqref{ryzhik9} is applicable because $p,$ $q>0$ and $t=\pi\alpha\in (0,\pi/2)$ by the condition on $\alpha.$

 Likewise, to justify (ii), we write:
 $$J_2(k)=\frac{1}{\alpha}\int_0^\infty x^{(k+1)/\gamma-1}e^{-b x}\sin (b x\tan(\pi \alpha))dx.$$ This is an integral of the form \eqref{ryzhik9}, where  $p=(k+1)/\gamma, q=b,$ and $t=\pi\alpha$ and hence
 $J_2(k)=0$ as in the previous case.

 Finally, in the case (iii), integral $J_3(k)$ can be split as
 \begin{multline*}
\frac{1}{\alpha}\int_0^\infty x^{k+\alpha -1}e^{-\beta x^\alpha}\sin [ \beta x^\alpha\tan (\pi\alpha)-\pi\alpha ]dx + \\
 \frac{1}{\alpha}\int_0^\infty x^{k+2\alpha -1}e^{-\beta x^\alpha}\sin [\beta x^\alpha\tan (\pi\alpha)-2\pi\alpha]dx=:U(k)+V(k).
\end{multline*}
The same substitution $x\mapsto x^\alpha$ leads to:
 \begin{multline*}
U(k)=\cos(\pi \alpha)\int_0^\infty x^{k/\alpha}e^{-\beta x}\sin (\beta x\tan(\pi \alpha))dx \\
-\sin(\pi \alpha)\int_0^\infty x^{k/\alpha}e^{-\beta x}\cos (\beta x\tan(\pi \alpha))dx.
\end{multline*}
Applying formulae \eqref{ryzhik9} and \eqref{ryzhik10} with $p=k/\alpha +1, q=\beta,$ and $t=\pi\alpha,$ one derives that $U(k)=\frac{\Gamma (p)}{\alpha q^p}\cos^p(\pi\alpha)\sin(k\pi)=0.$ Similarly, with $p= p=k/\alpha +2, q=\beta,$ and $t=\pi\alpha,$ we obtain that
$V(k)=\frac{\Gamma (p)}{\alpha q^p}\cos^p(\pi\alpha)\sin(k\pi)=0.$

\end{proof}

\begin{corollary} Let $f$ be a PDF for $PL(\alpha,\beta)$ distribution with $\alpha<1/2.$ Then, the following sets are Stieltjes classes for $f$:
$$S_i=\{f_{\epsilon}(x):f_{\epsilon}(x)=f(x)\left[1+\epsilon H_i(x)\right],\ x\in\mathbb{R},\ \epsilon\in[-1,1]\},\quad i=1,2,3.$$
\end{corollary}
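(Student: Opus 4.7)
The plan is to derive the corollary as an immediate consequence of the preceding theorem combined with the definitions of perturbation function and Stieltjes class recalled just above. First I would point out that the theorem already verifies exactly the hypotheses needed to instantiate those definitions: each $H_i$ is, by construction, integrable on $\mathbb{R}$ (vanishing for $x<0$ and decaying sufficiently fast for $x>0$), normalized so that $\sup_{x\in\mathbb{R}}|H_i(x)|=1$ via the choice of $M_i$, and satisfies the orthogonality relations $\int_0^\infty x^k f(x) H_i(x)\,dx = 0$ for every $k\in\mathbb{N}_0$. Thus each $H_i$ qualifies as a perturbation function for $f$ in the sense of the definition given above.

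Next I would perform the routine check that every member $f_\epsilon = f\,[1+\epsilon H_i]$ of $S_i$ is a bona fide probability density. Non-negativity is immediate: for $\epsilon\in[-1,1]$ one has $1+\epsilon H_i(x)\geq 1-|H_i(x)|\geq 0$, so $f_\epsilon(x)\geq 0$. The total-mass condition then follows by applying the orthogonality relation at $k=0$:
$$\int_{\mathbb{R}} f_\epsilon(x)\,dx = \int_{\mathbb{R}} f(x)\,dx + \epsilon \int_{\mathbb{R}} f(x) H_i(x)\,dx = 1 + 0 = 1.$$
The orthogonality at $k\geq 1$ yields $\int_{\mathbb{R}} x^k f_\epsilon(x)\,dx = m_k$ for every $\epsilon\in[-1,1]$, so each $f_\epsilon$ shares the entire moment sequence of $f$. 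By the definition of Stieltjes class this is exactly what is required, and the corollary follows.

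There is no genuine obstacle here, since the whole technical burden—the orthogonality identities—was already discharged in the proof of the preceding theorem via the integrals \eqref{ryzhik9} and \eqref{ryzhik10}. The hypothesis $\alpha<1/2$ plays a twofold role in making the statement substantive rather than vacuous: on the one hand, Theorem \ref{th2} ensures that $PL(\alpha,\beta)$ is moment-indeterminate in this regime, so that genuinely distinct densities with identical moments can in fact coexist; on the other hand, the derivation of the $H_i$ relied on applying \eqref{ryzhik9}--\eqref{ryzhik10} with $t=\pi\alpha\in(0,\pi/2)$, which is precisely the range enforced by $\alpha<1/2$. Without this hypothesis the construction of the $H_i$ would break down.
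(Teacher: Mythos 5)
Your proposal is correct and takes essentially the same route as the paper, which states this corollary without proof as an immediate consequence of the preceding theorem together with the definitions of perturbation function and Stieltjes class; your explicit verification that each $f_\epsilon=f[1+\epsilon H_i]$ is nonnegative, integrates to $1$, and shares the full moment sequence of $f$ is precisely the routine check the paper leaves implicit. (One small inaccuracy: $H_3$ is bounded but does not decay at infinity, so it is not itself Lebesgue integrable on $(0,\infty)$ --- only the products $x^k f H_3$ are --- but this imprecision is already present in the paper's definition of a perturbation function and is immaterial to the conclusion.)
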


\section{Application to software metrics}

Software metrics are objective measurements of software products used to assess the quality of the products. These days, a variety of software metrics are being proposed related to different parameters such as the size (of software as a whole or size of its inherent classes and methods), complexity (of software system, classes, methods), internal and external quality  characteristics of a software system. 
See, for example, \cite{kitchen, deepti}. Correspondingly, ample amount of data on the values of software metrics were collected and, as a result, a statistical analysis of such data has become in demand within engineering studies. See, for example, \cite{ferreira, deepti} and \cite{mile} where one can find an extensive list of references. In some problems related to software metrics, such as creating catalogues for threshold values, it is important to find probability distributions which best fit the empirical data. In the literature, the two-parameter Weibull distribution has been indicated as a useful instrument for this purpose, while new distributions are being offered by statisticians aiming to provide better tools for specific practical problems.

In this section, we implement the power Lindley distribution to data arrays provided to the authors as a courtesy by M. Stojkovski \cite{mile}, who collected the data related to 17 unique categories and, in each category, calculated the values of the following 5 metrics:
\begin{itemize}
\item CBO (Coupling Between Objects)
\item DIT (Depth of Inheritance Tree)
\item NOC (Number Of Children)
\item NOM (Number Of Methods)
\item RFC (Response For Class)
\end{itemize}

In this article, the data related to DIT  and NOC  metrics are used. These metrics  were introduced  and investigated by Chidamber and Kemerer \cite{chidam1} in order to measure complexity and coupling. The other data sets available in \cite{mile} can be analyzed likewise.

In the next two examples, the MATHLAB software was used and the method of least squares was applied to fit the power Lindley density.

\begin{example}[DIT system metric] DIT represents the maximum length of the path, as a number of graph edges, from a node to the root of the inheritance tree. It is known that the greater DIT value is, the higher the complexity of a design becomes. 
The data collected in \cite{mile} can be summarized in Table \ref{table:dit}.

\begin{table}[!ht]
    \caption{DIT in system category}
		\label{table:dit}
    \centering
    \begin{tabular}{|c|c|} \hline
\text{Values} & \text{Frequencies}\\ \hline
0 & 35.45 \\ \hline
1 & 54.27 \\ \hline
2 & 7.94 \\ \hline
3 & 1.50 \\ \hline
4 & 0.77 \\ \hline
5 & 0.07 \\ \hline
\end{tabular}            
\end{table}
Using the method of least squares, these data were approximated by the power Lindley density with $\alpha=1.1913, \beta=1.6979.$ Also, for comparison, we used the fitted Weibull distribution found in \cite{mile} with the help of the EasyFit software. Also, the error of approximation in each case was obtained. Table \ref{table:ex1res} summarizes the results and Figure \ref{fig:ex1} shows the data along with the fitted curves.
\begin{table}[!ht]
    \caption{DIT in system category}
		\label{table:ex1res}
    \centering
    \begin{tabular}{|c|c|c|c|c|c|c|} \hline
    \multirow{2}{*}{Distribution} &
      \multicolumn{2}{c|}{Parameters} & 
			\multirow{2}{*}{Error} & \multirow{2}{*}{Mean} & \multirow{2}{*}{Median} & \multirow{2}{*}{$|\bar{X}-\text{Mean}|$} \\ \cline{2-3}
			& $\alpha$ & $\beta$ & & & & \\ \hline
$PL_{LS}$  & 1.1913 & 1.6979 & 0.0065 & 0.7923 & 0.6475 & 0.0150 \\ \hline
$W_{EF}$   & 1.3969 & 1.0044 & 0.0741 & 0.9158 & 0.7726 & 0.1385 \\ \hline
\end{tabular}            
\end{table}

\begin{figure}[!ht]
\centering
\includegraphics[width=100mm]{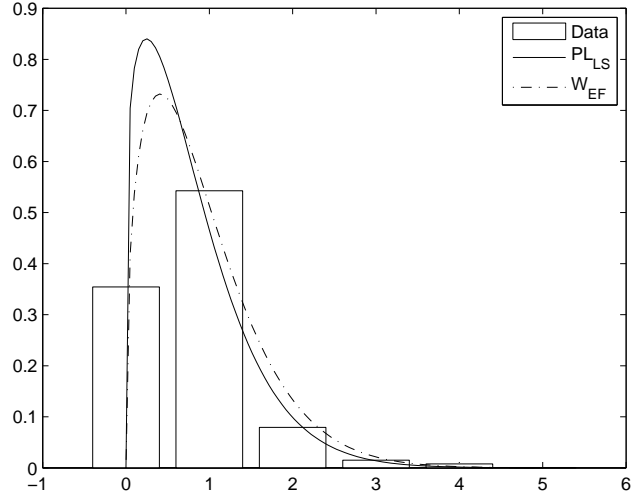}
\caption{Fitted distributions for DIT-system category}
\label{fig:ex1}
\end{figure}

\end{example}

\begin{example}[NOC system metric] 
NOC represents the number of immediate subclasses of a class in the hierarchy, measuring the number of subclasses inheriting the methods of the parent class. It is known that when NOC rises, so does re-use.
The highlights of the data collected in \cite{mile} appears in Table \ref{table:noc}.

\begin{table}[!ht]
    \caption{NOC in system category}
		\label{table:noc}
    \centering
    \begin{tabular}{|c|c|c|c|c|c|} \hline
\text{Value} & \text{Frequency} & \text{Value} & \text{Frequency} & \text{Value} & \text{Frequency} \\ \hline
0 & 92.21 &  7 & 0.09 & 14 & 0.04 \\ \hline
1 &  3.73 &  8 & 0.06 & 15 & 0.04 \\ \hline
2 &  1.99 &  9 & 0.11 & 17 & 0.02 \\ \hline
3 &  0.64 & 10 & 0.09 & 18 & 0.02 \\ \hline
4 &  0.32 & 11 & 0.02 & 19 & 0.04 \\ \hline
5 &  0.21 & 12 & 0.11 & 29 & 0.02 \\ \hline
6 &  0.19 & 13 & 0.04 &    &      \\ \hline
\end{tabular}            
\end{table}
It can be observed that the behavior of this data set is essentially different from that of DIT. The data set possesses strong right-skewed pattern, where the frequency of 0 dominates all of the other frequencies.

Like before, the method of least squares was applied and the outcomes along with the fitted Weibull distribution found in \cite{mile} by means of the EasyFit software are placed in Table \ref{table:ex2noc} and Figure \ref{fig:ex2fig1}.
\begin{table}[!ht]
    \caption{NOC in system category}
		\label{table:ex2noc}
    \centering
    \begin{tabular}{|c|c|c|c|c|c|c|} \hline
    \multirow{2}{*}{Distribution} &
      \multicolumn{2}{c|}{Parameters} & 
			\multirow{2}{*}{Error} & \multirow{2}{*}{Mean} & \multirow{2}{*}{Median} & \multirow{2}{*}{$|\bar{X}-\text{Mean}|$} \\ \cline{2-3}
			& $\alpha$ & $\beta$ & & & & \\ \hline
$PL_{LS}$  & 0.2750 & 3.6502 & 0.0001 & 0.2265 & 0.0053 & 0.1174 \\ \hline
$W_{EF}$   & 0.9499 & 1.0104 & 0.1136 & 1.0341 & 0.6869 & 0.9249 \\ \hline
\end{tabular}            
\end{table}

\begin{figure}
\centering
\includegraphics[width=120mm]{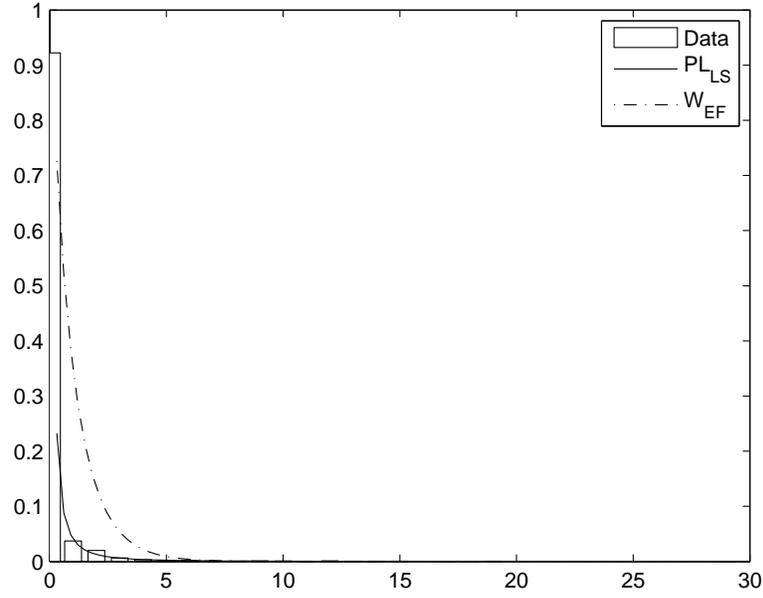}
\caption{Fitted distributions for NOC-system category}
\label{fig:ex2fig1}
\end{figure}

\begin{figure}[hbtp]
\begin{center}
\mbox{
\subfigure[Interval $0\leqslant x \leqslant 2$]{\includegraphics[width=60mm]{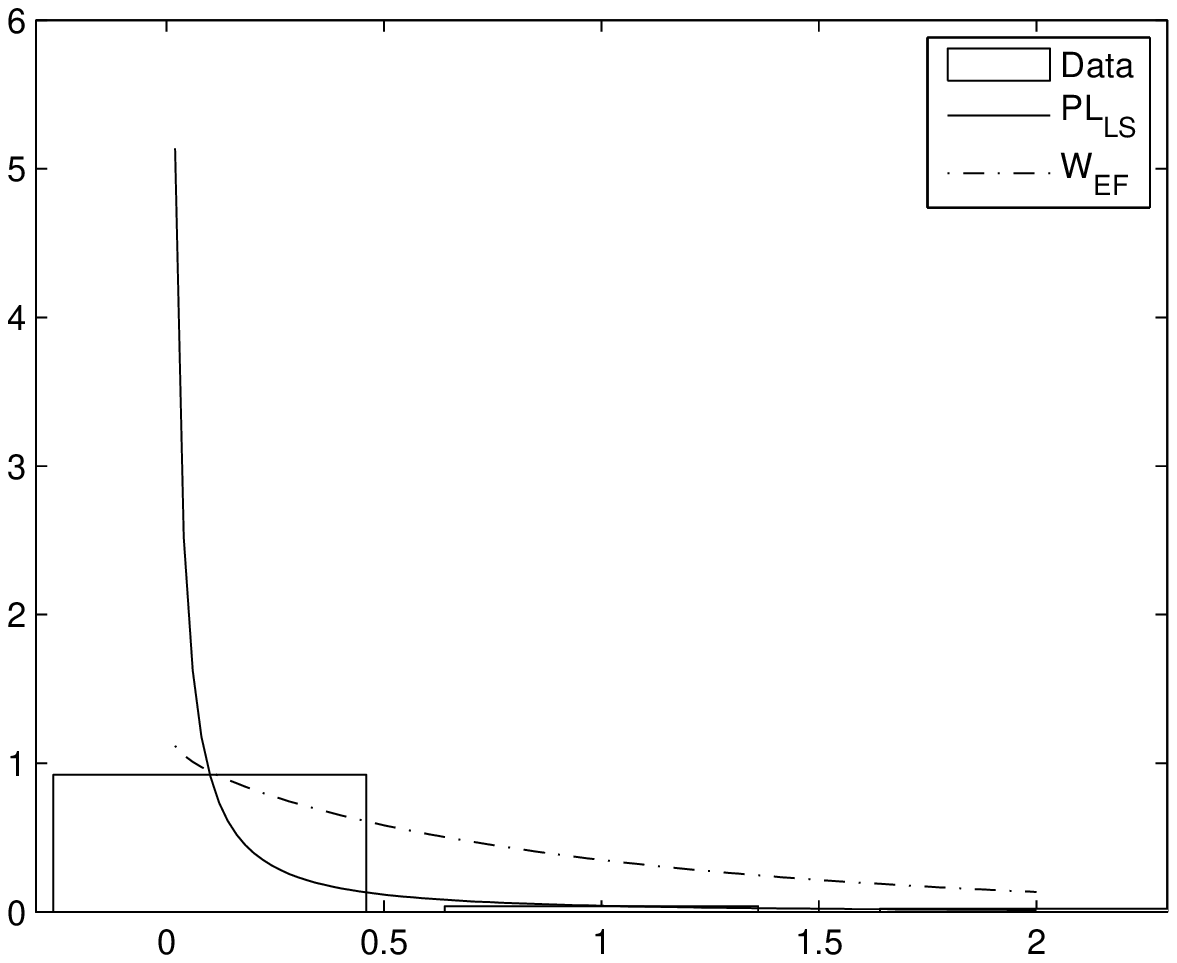}}\quad
\subfigure[Interval $2\leqslant x \leqslant10$]{\includegraphics[width=60mm]{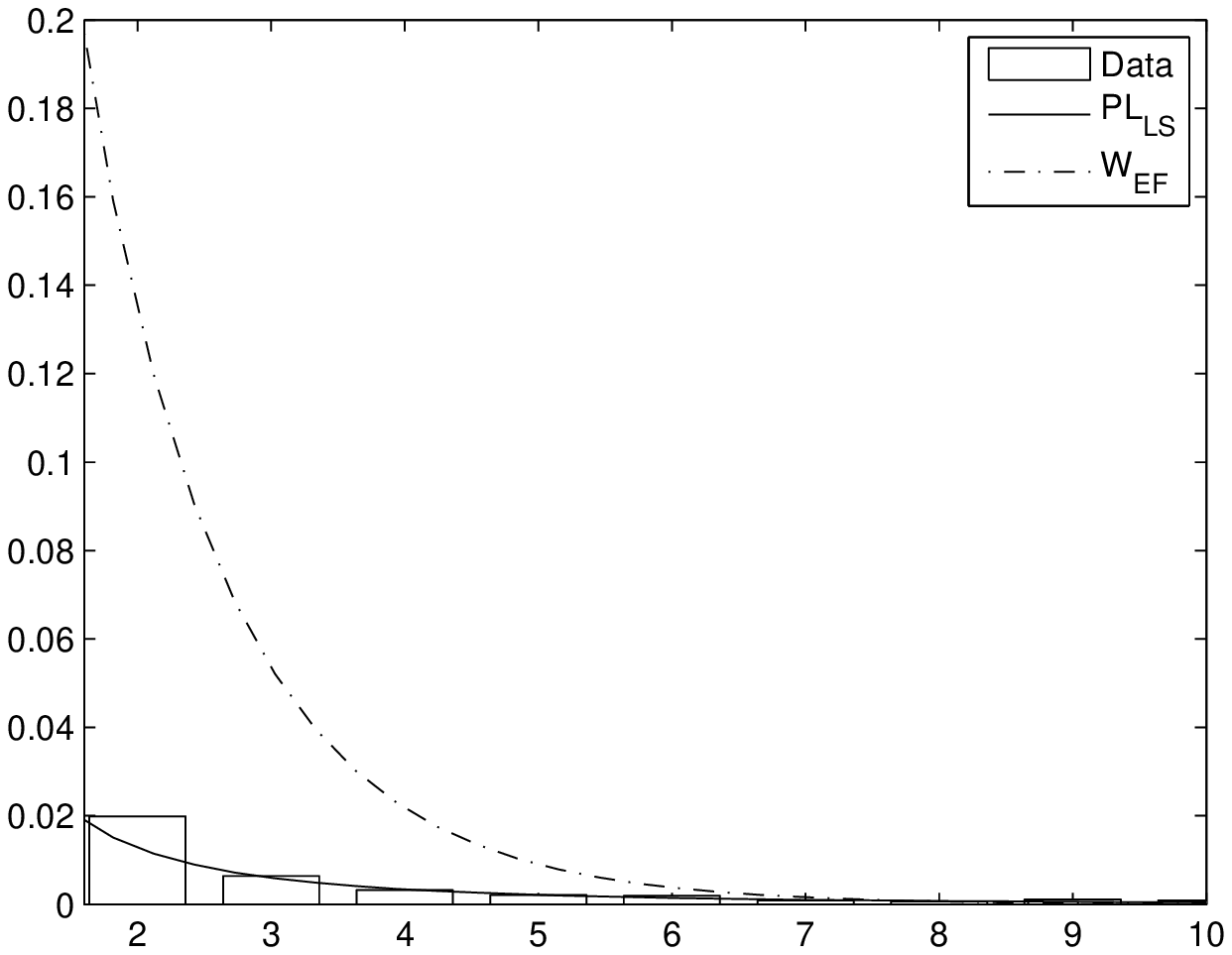}} 
}
\caption{Data and fitted densities on different intervals}
\end{center}
\end{figure}

\end{example}

\section{Conclusion}
This work is a continuation of the study on power Lindley distribution, 
initiated by M. E. Ghitany et al. in \cite{ghitanypower}. The goal of the current research is to obtain new results on the distribution and provide some novel applications. Since the power Lindley distribution becomes heavy-tailed when $\alpha <1$ - and, consequently, does not possess a moment-generating function - the examination  of its moment-(in)determinacy in this case has to be carried out. This is precisely the main result of this paper, stating that $PL(\alpha,\beta)$ distribution is moment-indeterminate if and only if $\alpha <1/2.$ Several Stieltjes classes have been constructed for this case.

Furthermore, this paper has discussed certain applications dealing with real data sets pertinent to the values of software metrics. Software metrics are currently a hot topic in the software engineering as they address quality standards followed by the software developers. The two-parameter Weibull distribution is commonly used to fit experimental data sets of software metrics. In this research, using the data collected in \cite{mile} for DIT and NOC metrics, it is shown that, for certain data sets, power Lindley distribution provides a better description of the data than Weibull distribution, not only for the light- but also for the heavy-tailed case. It has to be pointed out that both distributions are two-parameter, and therefore, similar in terms of complexity of the models. As for future work, it is planned to perform a similar data analysis for other software metrics and find new threshold values in collaboration with respective specialists.

\section*{Acknowledgements}
The authors express their sincere gratitude to Dr. Deepti Mishra (NTNU) for consulting them on the software metrics and to Mr. Mile Stojkovski for providing the collected data sets along with relevant references. Also, our thanks go to Mr. P. Danesh from the Atilim University Academic Writing and Advisory Center for his help in the presentation of the manuscript.


\begin{thebibliography}{99}

\bibitem{abramow}  M. Abramowitz and I. A. Stegun, {\it Handbook of mathematical functions with formulas, graphs, and mathematical tables}, Dover Publications, New York, 1972.

\bibitem{newweibull} A.  Al-Babtain, A.  A. Fattah, A-H. N. Ahmed and F.  Merovci, The Kumaraswamy-transmuted exponentiated modified Weibull distribution, 
\textit{Communications in Statistics - Simulation and Computation} 
\textbf{46}(5) (2017), 3812--3832.

\bibitem{armero93} C. Armero and M. J. Bayarri, A Bayesian analysis of a queueing system with unlimited service, {\it Technical Report} \# 93-50 (1993), Department of Statistics, Purdue University.

\bibitem{armero} C. Armero and M. J. Bayarri, A Bayesian analysis of a queueing system with unlimited service, {\it J. Stat. Plan. Inf.} \textbf{58} (1997), 241--261.

\bibitem{wind}  T. Arslan, S. Acitas, B. Senoglu, Generalized Lindley and Power Lindley distributions for modeling the wind speed data, \textit{Energy Conversion and Management} \textbf{152}(15), (2017), 300--311.


\bibitem{extended} H. S. Bakouch, B. M. Al-Zahrani, A. A. Al-Shomrani, V. A. A. Marchi, F. Louzada,
An extended Lindley distribution, {\it J. Korean Stat. Soc} {\bf 41} (2012) 75--85.


\bibitem{chidam1} S. R. Chidamber, and C. F.  Kemerer,  A metrics suite for object oriented
design, \textit{IEEE Trans. Software Eng.} \textbf{20}(6) (1994) 476-–493.


\bibitem{ferreira} K. A. M. Ferreira, M. A. S. Bigonha, R. S. Bigonha, L. F. O. Mendes, H. C. Almeida. 
Identifying thresholds for object-oriented software metrics, 
\textit{J. Systems and Software} \textbf{85} (2012), 244--257.


\bibitem{ghitanylindley}
M.E. Ghitany, B. Atieh, S. Nadarajah, Lindley distribution and its application, {\it Math. Comput. Simulat.} 
{\bf 78} (2008) 493--506.

\bibitem{ghitanyzero} M. E. Ghitany, D. K. Al-Mutairi, S. Nadarajah, Zero-truncated Poisson-Lindley distribution and its application, {\it Math. Comput. Simulat.} {\bf 79} (2008) 279--287.


\bibitem{ghitanytwo}
M. E. Ghitany, F. Alqallaf, D. K. Al-Mutairi, H. A. Husain, A two-parameter weighted Lindley distribution and its applications  to survival data, {\it Math. Comput. Simulat.} {\bf 81} (2011) 1190--1201.

\bibitem{ghitanypower} M. E. Ghitany, D. K. Al-Mutairi, N. Balakrishnan, L. J. Al-Enezi, Power Lindley distribution and associated inference, \textit{Comput. Stat. Data Anal.}, {\bf 64} (2013) 20--33.

\bibitem{ryzhik}  I. S. Gradshteyn and I. M. Ryzhik, {\it Table of Integrals, Series, and Products}, 8-th Edition, Elsevier/Academic Press, Amsterdam, 2015.

\bibitem{koutras} V. M. Koutras, K. Drakos, and  M. V. Koutras, A polynomial logistic distribution and its application in finance. \textit{Communications in Statistics: Theory and Methods}, \textbf{43} (10--12) (2014) 2045--2065.  Special Issue: Advances in Probability and Statistics.

\bibitem{kitchen} B. Kitchenham, What’s up with software metrics? – a preliminary mapping study {\it J. Systems and Software} {\bf 83} (1) 
(2010) 37--51.

\bibitem{recent} G. D. Lin, Recent developments on the moment problem, {\it J. Stat. Distributions Appl} (2017) 4:5, DOI: 10.1186/s40488-017-0059-2.

\bibitem{lindley} D. V. Lindley, Fiducial distributions and Bayes' theorem, {\it J. Royal Stat. Soc. Series B} 
{\bf 20} (1958) 102--107.

\bibitem{linnik} Yu. V. Linnik, I. V. Ostrovskii, \textit{Decomposition of random variables and vectors},  Translations of Mathematical Monographs, Vol. 48. American Mathematical Society, Providence, R. I., 1977.

\bibitem{aerosol} R. McGraw, S. Nemesure, S. E. Schwartz, Properties and evolution of aerosols with size distributions having identical moments, \textit{J. Aerosol Sci.}, \textbf{29}(1998), 761--772.

\bibitem{deepti} D. Mishra, A. Mishra, Object-Oriented Inheritance Metrics in the Context of Cognitive Complexity,  \textit{Fundamenta Informaticae}, \textbf{111}(1) (2011), 91--117.

\bibitem{ng} K. W. Ng, S. Kotz, Kummer-Gamma and Kummer-Beta univariate and bivariate distributions, \textit{Research report Serial No. 84, May 1995}. The University of Hong Kong, Department of Statistics, 1995.

\bibitem{alea}  S. Ostrovska, Constructing Stieltjes classes for M-indeterminate absolutely continuous probability distributions, \textit{ALEA, Lat. Am. J. Probab. Math. Stat.} \textbf{11} (1) (2014) 253--258.

\bibitem{kuwait} S. Ostrovska, M. Turan, On the powers of the Kummer distribution, {\it Kuwait J. Sci.} {\bf 44} (2017) 1--8.

\bibitem{pakes} A. G. Pakes, Structure of Stieltjes classes of moment-equivalent probability laws, 
\textit{ J. Math. Anal. Appl.}  \textbf{326} (2) (2007) 1268--1290.

\bibitem{penson} K. A. Penson, P. Blasiak, G. H. E. Duchamp, A. Horzela, and  A. I. Solomon, On certain non-unique solutions of the Stieltjes moment problem, \textit{Disc. Math. Theor. Comp. Sci.} \textit{12}(2), (2010) 295--306. 

\bibitem{stieltjes} T. J. Stieltjes,  Recherches sur les fractions continues. \textit{Annales de la Facult\'{e} des Sciences de Toulouse} \textbf{8} (1894) J76-J122.

\bibitem{mile} M. Stojkovski, Thresholds for Software Quality Metrics
in Open Source Android Projects. Master thesis (2017), NTNU. \\
URL: https://brage.bibsys.no/xmlui/bitstream/handle/11250/2479193\\/18263\_FULLTEXT.pdf?sequence=1\&isAllowed=y

\bibitem{jap} J. Stoyanov,  Stieltjes classes for moment-indeterminate probability distributions,
\textit{J. Appl. Probab.}  \textbf{41A} (2004) 281--294.

\bibitem{stoyanovfinance} J. Stoyanov,
Moment Properties of Probability Distributions Used in Stochastic Financial Models, 
In \textit{Recent Advances in Financial Engineering} (2014).

\end{thebibliography}
\end{document}